\newtheorem{thm}{Theorem}[section]
\newtheorem{prop}[thm]{Proposition}
\newtheorem{lem}[thm]{Lemma}
\theoremstyle{definition}
\newtheorem{ex}[thm]{Example}
\theoremstyle{remark}
\newtheorem{rem}[thm]{Remark}
\newcommand{\RR}{\mathbb R}
\newcommand{\ZZ}{\mathbb Z}
\newcommand{\CC}{\mathbb C}
\begin{document}

\address{Department of Mathematics and Statistics\\ University of Regina\\ Canada}

\email{liviu.mare@gmail.com}

\title{Simply connectedness of spaces of tight frames}

\author{Augustin-Liviu Mare}

%\date{June 30, 2025}

\begin{abstract} Complex tight frames can be canonically viewed as elements of a complex Stiefel manifold. We present a class of spaces of such frames which are simply connected relative to the subspace topology. 
To this class belongs the space of finite unit-norm tight frames.     
\end{abstract}
\maketitle
%\tableofcontents

\section{Statement of the main result}\label{sect1}

Let $0\le k \le n$ be integers. A {\it normalized tight frame} is a matrix $F\in {\rm Mat}_{k\times n}(\CC)$ whose product with its  conjugate transpose  $F^*$ is 
\begin{equation}\label{star}FF^* = {\rm I}_k.\end{equation} For more insight on this notion, see the monograph \cite{Wa}. 
Let $f_1, \ldots, f_n \in \CC^k$ be  column vectors, such that $F=[ \ f_1 \ | \ \ldots \ | \ f_n \ ]$. 
Given $d=(d_1,\ldots, d_n) \in \RR^n$ whose components are all non-negative, we denote by ${\mathcal F}_{n,k}^d$ the collection of all normalized tight frames $F$ with the property that
$$\|f_j\|^2=d_j, \  {\rm for \ all \ } 1 \le j \le n,$$
the norm being the standard one. 
For this space to be non-empty, it is necessary and sufficient that 
\begin{align*}
{}& 0\le d_j \le 1, \ {\rm for \ all \ } 1\le j \le n, \\  
{}& d_1+\cdots + d_n=k,
\end{align*}
see Rem.~\ref{sh} below.
Denote by $\Delta_{n,k}\subset \RR^n$ the polytope described by the equations above. 
It was proved in \cite{NS1} that for any $d \in \Delta_{n,k}$, the space ${\mathcal F}_{n,k}^d$ is path connected 
relative to the topology of subspace of  ${\rm Mat}_{k\times n}(\CC)$. The main result of this note is:

\begin{thm}\label{2mainthm}  If $d=(d_1, \ldots, d_n)\in \Delta_{n,k}$  such that  
\begin{equation} \label{hypothesis} d_{i_1} +\cdots + d_{i_{n-k}}\ge 1 \ {\it 
 for \ any \ pairwise  \  distinct } \ i_1, \ldots, i_{n-k} \in \{1, \ldots, n\}, \end{equation} then ${\mathcal F}_{n,k}^d$
is a simply connected topological subspace of ${\rm Mat}_{k\times n}(\CC)$. 
\end{thm}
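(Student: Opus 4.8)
\medskip

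The plan is to deduce Theorem~\ref{2mainthm} from a more flexible statement proved by induction. For a positive semidefinite Hermitian $k\times k$ matrix $S$ and a tuple $d=(d_1,\dots,d_m)$ of non-negative reals with $d_1+\cdots+d_m=\operatorname{tr}S$, let $\mathcal{G}(S;d)\subset\mathrm{Mat}_{k\times m}(\CC)$ denote the space of $[\,g_1\,|\,\cdots\,|\,g_m\,]$ with $g_1g_1^*+\cdots+g_mg_m^*=S$ and $\|g_j\|^2=d_j$ for all $j$, so that $\mathcal{F}_{n,k}^d=\mathcal{G}(\mathrm{I}_k;d)$. The Schur--Horn theorem for frames (the criterion underlying Rem.~\ref{sh}) says exactly when $\mathcal{G}(S;d)$ is non-empty, via a majorization relation between the spectrum of $S$ (padded by zeros) and $d$. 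I would isolate a hypothesis $H(S;d)$ -- a system of Horn/Schur--Horn-type inequalities between $\operatorname{spec}S$ and $d_1,\dots,d_m$ -- designed so that: (i) $H(\mathrm{I}_k;d)$ is exactly \eqref{hypothesis}; (ii) $H(S;d)$ implies that $\mathcal{G}(S;d)$ is a non-empty compact smooth manifold, the norm constraints being transverse; (iii) $H(S;d)$ implies that for \emph{every} $v\in\CC^k$ with $\|v\|^2=d_m$ the matrix $S-vv^*$ is positive semidefinite and $H(S-vv^*;d_1,\dots,d_{m-1})$ holds. One checks first that \eqref{hypothesis} forces $k\ge 2$, so the degenerate range $k\le 1$ -- where the conclusion is genuinely false, $\mathcal{F}_{n,1}^d$ being a torus -- does not occur; and since the size $k$ of $S$ is unchanged by the induction, $k\ge 2$ persists throughout.

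For the inductive step I would use the map that forgets the last frame vector,
$$q\colon\mathcal{G}(S;d)\longrightarrow\{v\in\CC^k:\|v\|^2=d_m\}\cong S^{2k-1},\qquad [\,g_1\,|\,\cdots\,|\,g_m\,]\longmapsto g_m$$
(assuming all $d_j>0$; a column with $d_j=0$ is identically zero and is simply omitted, decreasing $m$). By (ii) and (iii) the source is a compact manifold, $q$ is a surjective submersion, hence by Ehresmann's theorem a locally trivial fibre bundle, whose fibre over $v$ is $\mathcal{G}(S-vv^*;d_1,\dots,d_{m-1})$. Since $k\ge 2$, the base $S^{2k-1}$ is $2$-connected, so the homotopy exact sequence of $q$ degenerates to an isomorphism $\pi_1(\mathcal{G}(S;d))\cong\pi_1(\mathcal{G}(S-vv^*;d_1,\dots,d_{m-1}))$, and the right-hand side vanishes by the inductive hypothesis, the fibre satisfying $H$ by (iii). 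The induction terminates at small $m$, where $H$ pins $\mathcal{G}(S;d)$ down to an explicit simply connected space -- a point, an orbit of $U(k)$ or of a maximal torus, or a product of complex flag manifolds -- checked by inspection. Applying this with $(S,d)=(\mathrm{I}_k,d)$ and $d$ as in \eqref{hypothesis} then yields the theorem.

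The main obstacle is point (iii): finding the correct $H$ and verifying that it propagates to \emph{all} fibres. This amounts to controlling the spectrum of the rank-one perturbation $S-vv^*$ as $v$ ranges over the entire sphere $\|v\|^2=d_m$ -- via Cauchy interlacing together with the Horn-type description of the set of such spectra -- and checking that the ``worst'' $v$, the one driving $\operatorname{spec}(S-vv^*)$ closest to violating the Schur--Horn existence bound for the smaller frame space, still satisfies $H$. This is precisely where \eqref{hypothesis} is used and cannot be relaxed: once it fails the inductive hypothesis stops propagating and the conclusion can genuinely break down (already for $k=1$, where $\mathcal{F}_{n,1}^d$ is a torus, and more subtly when a sub-collection of the $d_j$'s is forced to occupy a coordinate subspace and splits off a circle factor).

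An alternative, possibly cleaner, route avoids carrying general frame operators: the map $F\mapsto F^*F$ realizes $\mathcal{F}_{n,k}^d$ as the bundle of unitary frames of the tautological rank-$k$ bundle $\gamma$ over the moment fibre $\mu^{-1}(d)\subset \mathrm{Gr}_k(\CC^n)$ of the standard Hamiltonian $T^n$-action, whose moment image is $\Delta_{n,k}$; equivalently, as the principal $U(k)$-bundle associated to $\gamma|_{\mu^{-1}(d)}$. Its homotopy exact sequence reduces Theorem~\ref{2mainthm} to the two assertions $\pi_1(\mu^{-1}(d))=0$ and that the inclusion-induced map $\pi_2(\mu^{-1}(d))\to\pi_2(\mathrm{Gr}_k(\CC^n))\cong\ZZ$ is onto (the second forcing the connecting homomorphism $\pi_2\to\pi_1(U(k))=\ZZ$ to be surjective, via $c_1(\gamma)$). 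Both are statements about a single fibre of a torus moment map, again governed by \eqref{hypothesis}, and I would prove them by the same inductive — or a Morse-theoretic — analysis of $\mu^{-1}(d)$.
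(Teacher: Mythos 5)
Your closing ``alternative route'' is, in substance, the paper's proof. The paper realizes $\mathcal{F}_{n,k}^d \to \mu^{-1}(d)$ as a principal $\mathrm{U}(k)$-bundle via $F\mapsto F^*F$, proves that the inclusion $\mu^{-1}(d)\hookrightarrow \mathrm{Gr}_k(\CC^n)$ is $3$-connected --- using Kirwan's stratification of ${\rm Gr}_k(\CC^n)$ by $\|\mu(\cdot)-d\|^2$, Wong's codimension formula for Schubert-type stable manifolds to translate hypothesis~(\ref{hypothesis}) into ``all non-open strata have real codimension $\ge 4$'', and Williams's lemma on complements of closed high-codimension submanifolds --- and then extracts $\pi_1(\mathcal{F}_{n,k}^d)=0$ by comparing the long exact sequence of $(\mathcal{F}_{n,k}^d,\mathrm{U}(k))$ with that of $(\mathrm{V}_k(\CC^n),\mathrm{U}(k))$, where $\pi_1(\mathrm{V}_k(\CC^n))=0$ supplies the surjectivity you attribute to $c_1(\gamma)$. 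Note that the paper's argument never needs $\mu^{-1}(d)$ or $\mathcal{F}_{n,k}^d$ to be a manifold; only the ambient Grassmannian is stratified.

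Your primary route, by contrast, has gaps that go beyond the one you flag. Point (ii) --- that a suitable $H(S;d)$ makes $\mathcal{G}(S;d)$ a smooth manifold with the norm constraints cut out transversally --- is not automatic and can fail even under hypothesis~(\ref{hypothesis}). For example $d=(\tfrac12,\tfrac12,\tfrac12,\tfrac12)$ with $n=4$, $k=2$ satisfies~(\ref{hypothesis}), yet $\mathcal{F}_{4,2}^d$ contains orthodecomposable points such as $[\,e_1/\sqrt2\ |\ e_1/\sqrt2\ |\ e_2/\sqrt2\ |\ e_2/\sqrt2\,]$, which are exactly the points where the works \cite{DS}, \cite{CMS} show the defining equations are not transversal; smoothness of FUNTF-type varieties is a delicate, $\gcd$-dependent matter. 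Without smoothness of every $\mathcal{G}(S;d)$ along the inductive chain, $q$ need not be a submersion and Ehresmann's theorem is unavailable, so the fibration you want to induct along need not exist. And point (iii) remains, as you acknowledge, the whole content of the argument: you never exhibit $H$, and proving that it survives $S\mapsto S-vv^*$ for \emph{every} $v$ on the sphere while terminating at a simply connected base case would require a non-trivial interlacing/majorization analysis that the paper sidesteps entirely by working on the Grassmannian side, where~(\ref{hypothesis}) enters once, as a single codimension bound.
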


\begin{rem}\label{lege}
If assumption 
 (\ref{hypothesis}) is satisfied, then $ 2\le k \le n- 2$, as shown in \cite[Rem.~1.3]{Mare}. Also see Ex.~\ref{exa1} below. 
\end{rem}

\begin{ex}\label{example1}
Given $n$ and $k$ such that $2\le k \le n-2$, there exists $d\in \Delta_{n,k}$ which satisfies 
assumption (\ref{hypothesis}), namely  $$d=\left(\frac{k}{n}, \ldots, \frac{k}{n}\right).$$
The corresponding  frames have been previously investigated by several authors, although in a slightly different presentation,
being called {\it unit-norm} or {\it spherical} tight frames, see  for instance \cite{DS} and \cite{CMS}. 
 \end{ex}

The proof of Thm.~\ref{2mainthm} which is presented below uses a certain relationship between the space ${\mathcal F}_{n,k}^d$ 
and the Grassmannian of $k$-dimensional complex vector subspaces of $\CC^n$. The latter is coming equipped with a canonical symplectic form and a Hamiltonian torus action, the image of the  moment map being just the polytope $\Delta_{n,k}$ above.  
To achieve our goal, we first show  that the pre-image under the moment map of any $d\in \Delta_{n,k}$ which satisfies  assumption (\ref{hypothesis}) is a simply connected subspace of the Grassmannian, see Thm.~\ref{mainthm} below. It  remains to  deduce that ${\mathcal F}_{n,k}^d$ is simply connected as well, which can be done due to its  canonical structure of a ${\rm U}(k)$-principal bundle over the pre-image above. 
Results  from symplectic geometry are fundamental ingredients here. Connections between this branch of mathematics and frame theory were for the first time pointed out in the aforementioned work \cite{NS1} of  Needham and Shonkwiler. Recently, in \cite{CNS},  further progress in studying the topology of ${\mathcal F}_{n,k}^d$
has been made by the same two authors, this time together with Caine, by identifying points $d\in  \Delta_{n,k}$ for which certain higher order homotopy groups of this space vanish. More precisely, this is the content of \cite[Thm.~4.5]{CNS}, and from this, a simply connectedness criterion can immediately be deduced. The latter turns out to be closely related to  
the one given by Thm.~\ref{2mainthm}  above, as explained in Ex.~\ref{exa4}. 
Similarities between \cite[Sect.~4]{CNS} and the present note can also be noticed concerning  the methods used, see also Rem.~\ref{meth} below.

Before starting the proof, we would like to mention that the manifolds in this note are all going to be smooth  and  submanifolds will always be embedded. 
As a first step, we will set the stage by introducing the  Stiefel and the Grassmann manifolds and making some considerations which will turn out to be  useful later on.

\section{Frames via the Stiefel and the Grassmann manifolds}\label{Grass}

We start by endowing $\CC^n$ with the usual Hermitean inner product, that is, $\langle u, v\rangle =uv^*$, for all $u,v \in \CC^n$.  Recall that the Stiefel manifold 
${\rm V}_k(\CC^n)$ consists of all sequences of $k$ orthonormal vectors in
$\CC^n$. 
Any element of this space can be identified with the $k\times n$ matrix $F$ whose rows are exactly the $k$ vectors. 
In this way, ${\rm V}_k(\CC^n)$ is exactly the space of all normalized tight frames, that is, of matrices $F\in {\rm Mat}_{k\times n}(\CC)$
which satisfy condition (\ref{star}). 

To any $k$-tuple of orthonormal vectors one attaches their span, which is a $k$-dimensional complex vector subspace of $\CC^n$,
and obtains in this way a map from ${\rm V}_k(\CC^n)$ to the Grassmannian   ${\rm Gr}_k(\CC^n)$ of all such subspaces. 
This map is a principal ${\rm U}(k)$-bundle, where the unitary group ${\rm U}(k)$ acts by matrix multiplication from the left on the Stiefel manifold.
Let us now identify any vector space $V$ in ${\rm Gr}_k(\CC^n)$ with the linear endomorphism $\pi_V$ of $\CC^n$ which represents the orthogonal projection of $\CC^n$  onto $V$ with respect to the inner product
mentioned above. The matrix of this endomorphism relative to the canonical basis is Hermitean and, if  
${\rm Herm}_n$ denotes the space of all such $n\times n$ complex matrices, one obtains 
\begin{equation}\label{gra}{\rm Gr}_k(\CC^n) = \{P \in {\rm Herm}_n \mid P^2=P, \ {\rm tr}(P)=k\}.\end{equation}
Like in Sect.~\ref{sect1}, let  $d=(d_1,\ldots, d_n) \in \RR^n$ be a vector with all $d_j \ge 0$. Then ${\mathcal F}_{n,k}^d$ is a subspace of
${\rm V}_k(\CC^n)$, clearly ${\rm U}(k)$-invariant. We wish to describe its image under the principal bundle map
mentioned above. To this end, we attach to any matrix $P \in {\rm Herm}_n$ the vector $\mu(P)$, which consists of the diagonal entries of $P$, and obtain in this way  a map $\mu : {\rm Gr}_k(\CC^n) \to \RR^n$. 
The principal bundle map ${\rm V}_k(\CC^n)\to {\rm Gr}_k(\CC^n)$ can be described as $F\mapsto F^* F$, for all $F\in {\rm V}_k(\CC^n)$, 
and consequently it transforms ${\mathcal F}_{n,k}^d$ into the pre-image  $\mu^{-1}(d)$. 
By also taking into account the topologies inherited from ${\rm Mat}_{k\times n}(\CC)$ and ${\rm Herm}_n$, one obtains a
homeomorphism
\begin{equation}\label{hom}{\mathcal F}_{n,k}^d /{\rm U}(k) \simeq \mu^{-1}(d).\end{equation}

\begin{rem}\label{sh} As a special case of the Schur-Horn theorem, see \cite{Schur} and \cite{Ho}, the image of $\mu: {\rm Gr}_k(\CC^n) \to \RR^n$ is just the polytope
$\Delta_{n,k}$ defined in Sect.~\ref{sect1} (see also \cite[Sect.~2.2]{Buch}). This explains why ${\mathcal F}_{n,k}^d$ is non-empty exactly if $d$ lies in this polytope.
\end{rem}

In order to prove Thm.~\ref{2mainthm}, we will now use the homeomorphism (\ref{hom}), combined with the knowledge of
the first two homotopy groups of  $\mu^{-1}(d)$. These will be computed in the next two sections. 

\section{Homotopy groups of $\mu^{-1}(d)$}

In this section we will make the first steps towards proving the following result.
\begin{thm}\label{mainthm}  If $d=(d_1, \ldots, d_n)\in \Delta_{n,k}$ satisfies assumption (\ref{hypothesis}), 
 then the first two homotopy groups of
 $\mu^{-1}(d)$ are: 
 \begin{align*}
 {}& \pi_1(\mu^{-1}(d))=\{0\}\\
 {}&   \pi_2(\mu^{-1}(d))\simeq \ZZ.
 \end{align*}
\end{thm}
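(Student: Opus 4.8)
The strategy is to realize $\mu$ as (the diagonal part of) the moment map for the Hamiltonian action of the maximal torus $T = ({\rm U}(1))^n/{\rm U}(1) \subset {\rm PU}(n)$ on the Grassmannian ${\rm Gr}_k(\CC^n)$, equipped with (a multiple of) its Fubini--Study symplectic form coming from the Plücker embedding, and then to exploit the topology of symplectic reductions at regular values. Indeed, the conjugation action of $T$ on ${\rm Herm}_n$ restricts to ${\rm Gr}_k(\CC^n)$ via \eqref{gra}, and sending $P$ to its diagonal is exactly the moment map for this action, so $\mu^{-1}(d)$ is the level set and $\mu^{-1}(d)/T$ the symplectic quotient $M_d$. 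The hypothesis \eqref{hypothesis} should be precisely the condition guaranteeing that $d$ is a \emph{regular} value of $\mu$ lying in the interior of $\Delta_{n,k}$: one checks that the stabilizer of any $P \in \mu^{-1}(d)$ is trivial, equivalently that no proper coordinate subspace can "absorb" the frame, which is exactly the strict inequality \eqref{hypothesis} (and Rem.~\ref{lege}'s bound $2 \le k \le n-2$ is the degenerate boundary case). Hence $T$ acts freely on $\mu^{-1}(d)$, which makes $\mu^{-1}(d) \to M_d$ a principal $T$-bundle, and $M_d$ a smooth symplectic manifold.

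First I would record the long exact homotopy sequence of the principal $T^{n-1}$-bundle
\[
\cdots \to \pi_2(T) \to \pi_2(\mu^{-1}(d)) \to \pi_2(M_d) \to \pi_1(T) \to \pi_1(\mu^{-1}(d)) \to \pi_1(M_d) \to \pi_0(T) = 0,
\]
with $\pi_2(T) = 0$ and $\pi_1(T) \simeq \ZZ^{n-1}$. So everything reduces to computing $\pi_1(M_d)$, $\pi_2(M_d)$, and the connecting map $\pi_2(M_d) \to \ZZ^{n-1}$. Next I would invoke the standard results on the topology of symplectic quotients of compact (quasi-)projective manifolds at regular values — the Kirwan surjectivity circle of ideas together with the fact that $M_d$ is again a compact symplectic (indeed projective GIT) quotient, plus the fact that $\pi_1$ of such a reduction equals $\pi_1$ of the ambient manifold when the level set is connected (Kirwan/relevant results on fundamental groups of GIT quotients), to conclude $\pi_1(M_d) = \pi_1({\rm Gr}_k(\CC^n)) = 0$. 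For $\pi_2$, since $M_d$ is a simply connected compact symplectic manifold, $\pi_2(M_d) \simeq H_2(M_d;\ZZ)$ by Hurewicz; and one computes $H_2(M_d;\ZZ)$ — e.g.\ via the isomorphism $H^2(M_d) \simeq H^2_T(\mu^{-1}(d))$ and the identification of $H^*_T({\rm Gr}_k(\CC^n))$, or by recognizing $M_d$ as a polygon-space-type variety whose second Betti number one can read off — obtaining $H_2(M_d;\ZZ) \simeq \ZZ^{n-1} \oplus \ZZ$ (the $n-1$ classes dual to the boundary divisors of the quotient plus one class pulled back from the Grassmannian).

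Plugging this into the exact sequence: $\pi_1(\mu^{-1}(d))$ is the cokernel of $\pi_2(M_d) \simeq \ZZ^n \to \pi_1(T) \simeq \ZZ^{n-1}$, and the key point is that this connecting homomorphism — which records how the principal $T$-bundle $\mu^{-1}(d)$ sits inside the universal one, i.e.\ its Euler/Chern classes — is \emph{surjective}. Geometrically, the $n-1$ "divisor" classes in $H_2(M_d)$ are exactly the spheres that link the boundary strata and wind once around the corresponding circle factors, so they map onto a basis of $\ZZ^{n-1}$; this forces $\pi_1(\mu^{-1}(d)) = 0$ and identifies $\pi_2(\mu^{-1}(d))$ with the kernel, which is the rank-one subgroup generated by the class pulled back from ${\rm Gr}_k(\CC^n)$, giving $\pi_2(\mu^{-1}(d)) \simeq \ZZ$. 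The main obstacle is the last step: proving surjectivity of the connecting map $\pi_2(M_d) \to \pi_1(T)$, equivalently that the Chern classes of the bundle $\mu^{-1}(d) \to M_d$ span $H^2(M_d;\ZZ)$ modulo the pullback from the Grassmannian — this requires an honest understanding of the divisors of the reduced space and their intersection with test spheres, and is where \eqref{hypothesis} must be used a second time (to ensure all $n$ boundary strata are genuinely present and codimension-two). A secondary technical point is verifying connectedness of $\mu^{-1}(d)$ uniformly under \eqref{hypothesis}, but this follows from the Atiyah--Guillemin--Sternberg convexity theorem (connected fibers of moment maps for torus actions on compact connected symplectic manifolds), so I expect it to be routine.
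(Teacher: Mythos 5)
Your proposal rests on a premise that is in fact false under the hypothesis~(\ref{hypothesis}): you assert that (\ref{hypothesis}) forces $d$ to be a regular value of $\mu$ with the torus $T/S^1$ acting freely on $\mu^{-1}(d)$, so that $\mu^{-1}(d)\to M_d$ is a principal $T^{n-1}$-bundle over a smooth symplectic quotient. This is not the case, even in the flagship example. Take $n=4$, $k=2$, and $d=(\tfrac12,\tfrac12,\tfrac12,\tfrac12)$; any two of the $d_i$ sum to $1$, so (\ref{hypothesis}) holds, and this is the FUNTF value of Ex.~\ref{example1}. Let $V$ be the span of $(e_1+e_2)/\sqrt2$ and $(e_3+e_4)/\sqrt2$. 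One computes $\mu(V)=(\tfrac12,\tfrac12,\tfrac12,\tfrac12)=d$, yet $V$ is preserved by every diagonal matrix ${\rm Diag}(t,t,s,s)$, so its stabilizer in $T$ is a $2$-torus (a circle modulo the center). Hence $d$ is not a regular value, $\mu^{-1}(d)$ is not a smooth manifold \emph{a priori}, and the principal bundle $\mu^{-1}(d)\to M_d$ you build your exact sequence on does not exist. More generally, whenever there is a proper partition $\{1,\dots,n\}=\bigsqcup_j I_j$ and integers $k_j$ with $\sum_j k_j=k$ and $\sum_{i\in I_j}d_i=k_j$, the level $\mu^{-1}(d)$ contains split planes $\bigoplus_j(V\cap E_{I_j})$ with positive-dimensional stabilizer; this happens for $d=(k/n,\dots,k/n)$ whenever $\gcd(k,n)>1$, and (\ref{hypothesis}) does nothing to exclude it.

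Even setting aside the regularity problem, the rest of the plan is not yet a proof. The asserted value $H_2(M_d;\ZZ)\simeq\ZZ^{n-1}\oplus\ZZ$ is not derived (and for symplectic reductions of the Grassmannian it generally depends on the chamber containing $d$), and the surjectivity of the connecting map $\pi_2(M_d)\to\pi_1(T)$ is exactly the content you would need to prove, but it is only motivated heuristically. The paper sidesteps all of this: it makes no use of the symplectic quotient at a regular value, but instead applies Kirwan's Morse-theoretic stratification of ${\rm Gr}_k(\CC^n)$ by the gradient flow of $\|\mu-d\|^2$ (which is available even when $d$ is a singular value), shows under (\ref{hypothesis}) that every non-open stratum has real codimension at least $4$ (Prop.~\ref{prop1ton}), and then uses the codimension estimate of Lemma~\ref{cs} to conclude that $\mu^{-1}(d)\hookrightarrow{\rm Gr}_k(\CC^n)$ is $3$-connected (Prop.~\ref{key}); the two homotopy groups are then read off from those of the Grassmannian. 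That argument is insensitive to the stabilizer issues that break your approach, and it is the place where (\ref{hypothesis}) is actually used.
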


Due to its rich structure, the Grassmannian mentioned in the previous section will be for us a useful tool. It admits a presentation as an  adjoint
orbit of the unitary group ${\rm U}(n)$ and is therefore canonically contained in the Lie algebra of this group, which is the space of all $n\times n$ skew-Hermitean matrices. By neglecting a factor of $i$, this viewpoint is exactly the one reflected by eq.~(\ref{gra}).
Let us now consider 
 the inner product 
\begin{equation} \label{metric}\langle X, Y\rangle ={\rm tr} (XY),\end{equation} $X, Y\in {\rm Herm}_n,$
  which is clearly invariant under the adjoint action of ${\rm U}(n)$. 
Like any adjoint orbit of a compact Lie group, ${\rm Gr}_k(\CC^n)$ can be equipped with a K\"ahler structure.
The space of all diagonal matrices in ${\rm U}(n)$ is an $n$-torus, call it $T$. The induced action of this torus on the Grassmannian is
Hamiltonian, and the corresponding moment map is just $\mu : {\rm Gr}_k(\CC^n) \to {\mathfrak t} $, which was defined above
(${\mathfrak t}$ denotes the Lie algebra of  $T$, which is naturally identified with ${\mathbb R}^n$).  
Strictly speaking, the moment map takes values in the dual space, ${\mathfrak t}^*$, but this is identified with 
${\mathfrak t}$ by means of the 
inner product (\ref{metric}). %Finally, note that the Riemannian metric coming from the aforementioned K\"ahler stucture on ${\rm Gr}_k(\CC^n)$ is exactly the one induced by eq.~(\ref{metric}) via restriction to the tangent spaces, see e.g.~\cite{EM}. 

We will investigate the pre-image $\mu^{-1}(d)$ by using results obtained  by Kirwan  in \cite{Ki}. 
The main tool is the ``norm squared of the moment map", that is the function  
 $f:{\rm Gr}_k(\CC^n) \to \RR$, $$f(V) = \|\mu(V)-d \|^2, \ V\in {\rm Gr}_k(\CC^n),$$
 where the norm is the one arising from the standard inner product on $\RR^n$. 
 More concretely, we will use  results from \cite[Sect.~4]{Ki}.
 One can see there that a prominent role is played by real-valued functions on the symplectic manifold which are obtained by pairing the moment map $\mu(\cdot) - d$ with elements  of ${\mathfrak t}$. In the case at hand, by the previous considerations, these are, modulo an additive constant, exactly the height functions which arise from the presentation (\ref{gra}) and the inner product (\ref{metric}). More precisely,   one is taking 
 $a=(a_1, \ldots, a_n) \in \RR^n$ and define $h_a: {\rm Gr}_k(\CC^n) \to \RR$, \begin{equation}\label{hadef}h_a(V)=\langle \pi_V,D_a\rangle,\end{equation}
where $D_a$ is the diagonal matrix ${\rm Diag} (a_1, \ldots, a_n)$. 
Now, these functions are well understood from the point of view of Morse theory. 
For example, this topic is approached nicely by Guest in \cite[Sect.~3]{Gu}. In what follows we will simply reproduce some of the  considerations from \cite{Gu}, with suitable additions  for which we refer to \cite[Sect.~2]{Mare}.

Let us first assume that 
\begin{equation}\label{ineqa}a_1\ge a_2 \ge \cdots \ge a_n.\end{equation}
The eigenspace decomposition of $D_a: \CC^n \to \CC^n$ is
$$\CC^n=E_1 \oplus \cdots \oplus E_\ell.$$
For any $u=\{u_1, \ldots , u_k\}\subset \{1, \ldots, n\}$  such that $u_1 < \cdots < u_k$ , set 
$$V_u:=\CC e_{u_1} \oplus \cdots \oplus \CC e_{u_k}$$
and consider its orbit 
$$M_{u}^a:={\rm U}(E_1) \times \cdots \times  {\rm U}(E_\ell) .V_u.$$
Note that 
$$V_u = (V_u \cap E_1) \oplus \cdots \oplus (V_u\cap E_\ell)$$
and consequently, by setting  $c_j:=\dim (V_u\cap E_j)$, $1\le j \le \ell,$ one has
\begin{equation}\label{mu}M_{u}^a={\rm Gr}_{c_1}(E_1) \times \cdots \times {\rm Gr}_{c_\ell}(E_\ell).\end{equation}
The importance of these submanifolds of ${\rm Gr}_k(\CC^n)$ resides in the fact that they are just the connected components 
of the critical set of $h_a$. Furthermore, if ${\rm Gr}_k(\CC^n)$ is equipped with the Riemann metric\footnote{This is actually the same as the canonical K\"ahler metric on the Grassmannian, viewed as an adjoint orbit of ${\rm U}(n)$, see e.g.~\cite{EM}.}  induced by the inner product (\ref{metric})
and the embedding described by eq.~(\ref{gra}),   the   stable manifold corresponding   to $M_{u}^a$
has the following presentation:
\begin{equation}\label{stablemnf}S_{u}^a = \{V \in {\rm Gr}_k(\CC^n) \mid \dim V \cap (E_1 \oplus \cdots \oplus E_j)= c_1+ \cdots + c_j, 
\ {\rm for \ all } \ 1\le j \le \ell\}.\end{equation}

Let us now drop the assumption (\ref{ineqa}). For any permutation $\tau \in S_n$ and any vector $x=(x_1, \ldots, x_n)$, we denote  
$$x^\tau:=(x_{\tau(1)}, \ldots, x_{\tau(n)}).$$ 
Getting back to $a=(a_1, \ldots, a_n)$, which is arbitrary in $\RR^n$, there exists $\sigma\in S_n$ such that
 \begin{equation}\label{orfera}a_{\sigma(1)} \ge \cdots \ge a_{\sigma(n)}.\end{equation}
 Let also $g$ be the linear transformation of $\CC^n$ such that
 $$ge_j=e_{\sigma(j)},$$
 for all $1\le j \le n$, where $e_1, \ldots, e_n$ is the canonical basis of $\CC^n$. 
 
 \begin{prop}\label{critgen}
\begin{itemize}
\item[(i)] For any $V\in {\rm Gr}_k(\CC^n)$, one has 
\begin{equation} \label{ha} h_{{a}^\sigma}(V)=h_a(gV)\end{equation}
and
\begin{equation} \label{mug} \mu(gV)=\mu(V)^{\sigma^{-1}}.\end{equation}
\item[(ii)] 
The function $h_a$ is Morse-Bott and its critical set  is  
\begin{equation}\label{ch}{\rm Crit}(h_a)=g{\rm Crit}(h_{{a}^\sigma})=\bigcup gM_{u}^{{a}^\sigma},\end{equation}
where ${u\subset \{1,\ldots, n\}}$ with $k$ elements.  
Furthermore, the  stable manifold corresponding to $gM_{u}^{{a}^\sigma}$ is $gS_{u}^{{a}^\sigma}$, where
$S_{u}^{{a}^\sigma}$ is described by eq.~(\ref{stablemnf}). 
\end{itemize}
\end{prop}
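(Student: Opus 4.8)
The plan is to reduce everything to the case of ordered weights, which is already well understood from the Morse theory of the height functions $h_a$ on the Grassmannian. First I would observe that for part (i), equation (\ref{ha}) is essentially a restatement of the identity $\langle \pi_{gV}, D_a\rangle = \langle \pi_V, g^{-1}D_a g\rangle = \langle \pi_V, D_{a^\sigma}\rangle$, using that $g$ is a permutation matrix (hence unitary), that conjugating $\pi_V$ by $g$ gives $\pi_{gV}$, and that conjugating the diagonal matrix $D_a$ by the permutation $g^{-1}$ (which sends $e_j \mapsto e_{\sigma^{-1}(j)}$) permutes its diagonal entries to give $D_{a^\sigma}$; the invariance of the trace form (\ref{metric}) under ${\rm U}(n)$ then finishes it. Equation (\ref{mug}) is the analogous computation for the diagonal of $\pi_{gV} = g\pi_V g^{-1}$: the $j$-th diagonal entry of $g\pi_V g^{-1}$ equals the $\sigma^{-1}(j)$-th diagonal entry of $\pi_V$, which is exactly the statement $\mu(gV) = \mu(V)^{\sigma^{-1}}$ once one unwinds the indexing convention for $x^\tau$.

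For part (ii), the idea is that the diffeomorphism $V \mapsto gV$ of ${\rm Gr}_k(\CC^n)$ carries $h_{a^\sigma}$ to $h_a$ by (\ref{ha}), so it carries critical sets to critical sets and, because $g$ is an isometry of the K\"ahler metric induced by (\ref{metric}) (it lies in ${\rm U}(n)$, which acts by isometries on this adjoint orbit), it also carries gradient flow lines to gradient flow lines and hence stable manifolds to stable manifolds. Since $a^\sigma$ satisfies the ordering hypothesis (\ref{ineqa}), the already-recorded description applies: $h_{a^\sigma}$ is Morse--Bott with critical components $M_u^{a^\sigma}$ as in (\ref{mu}) and stable manifolds $S_u^{a^\sigma}$ as in (\ref{stablemnf}), where $u$ ranges over the $k$-element subsets of $\{1,\dots,n\}$. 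Transporting all of this under $g$ gives exactly (\ref{ch}) and the claimed stable manifolds $gS_u^{a^\sigma}$; the Morse--Bott property is preserved because being Morse--Bott is a diffeomorphism-invariant condition on $(M, f)$.

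The main step requiring care is tracking the two competing index conventions consistently: the permutation $\sigma$ reorders the \emph{weights} $a_j$, while $g$ reorders the \emph{basis vectors} $e_j$, and these are related by $g e_j = e_{\sigma(j)}$, so that conjugation by $g$ on diagonal matrices acts by $\sigma^{-1}$ on entries. It is easy to get an inverse wrong here, which is why the statement has $h_{a^\sigma}(V) = h_a(gV)$ (a $\sigma$) but $\mu(gV) = \mu(V)^{\sigma^{-1}}$ (a $\sigma^{-1}$); I would verify both on a tiny example, say $n=2$ with a transposition, before writing the general argument. Once the bookkeeping in part (i) is pinned down, part (ii) is formal: it is just the observation that a metric-preserving diffeomorphism intertwining two functions intertwines their entire Morse--Bott landscapes, critical sets and stable manifolds alike.
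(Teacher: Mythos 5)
The paper states Proposition~\ref{critgen} without giving an explicit proof, merely citing \cite{Gu} and \cite{Mare} for the ordered case; your argument fills this gap by exactly the intended route. The change-of-variable computation for (\ref{ha}) and (\ref{mug}) is correct (the $j$-th diagonal entry of $g\pi_V g^{-1}$ is $(\pi_V)_{\sigma^{-1}(j),\sigma^{-1}(j)}$, which gives both identities after reindexing), and transporting the Morse--Bott structure of $h_{a^\sigma}$ along the isometry $V\mapsto gV$ (isometry because the metric is induced by the $\Ad(\mathrm{U}(n))$-invariant trace form) correctly yields (\ref{ch}) and the description of the stable manifolds.
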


 \begin{rem}\label{complex} We note that ${\rm Gr}_k(\CC^n)$ is canonically a complex manifold and  $S_{u}^a$  a complex submanifold of it, see e.g.~\cite[p.~170]{Gu}. %\marginpar{Schubert cell ?}
\end{rem}

\begin{rem}\label{ambig2} 
For a given $a\in \RR^n$ there are in general several ways to choose $\sigma\in S_n$ leading to the same $a^\sigma$.
After choosing such a $\sigma$, there are several $u\subset \{1, \ldots, n\}$ such that $gM_{u}^{{a}^\sigma}$ is the same.  
\end{rem}

\begin{rem} \label{unif} Assume $a=(x, \ldots, x)$, that is, all components of $a$ are equal to each other. Then 
$h_a(V) = \langle \pi_V, x{\rm I}_n\rangle = {\rm tr}(x\pi_V)=xk$, which means that the function $h_a$ is constant. 
Thus all $V\in {\rm Gr}_k(\CC^n)$ are critical points for $h_a$. This is also confirmed by the theory above, since 
we clearly have $\ell =1$ and for any $u\subset \{1, \ldots, n\}$ with $k$ elements, $M_u^a$ is the whole 
${\rm Gr}_k(\CC^n)$. 
\end{rem}

 All this  enables us to state:
  
\begin{thm}\label{stratif} \begin{itemize} 
\item[(i)] The critical set of $f_{}$ is the union of all (non-empty) intersections of the form
$\mu^{-1}(d+a) \cap gM_{u}^{{a}^\sigma}$ where $a\in \RR^n$  and $u\subset \{1, \ldots, n\}$ with $k$ elements.
The intersection is non-empty  only for finitely many  $a$ and $u$ as above. For any such $a$ and $u$, there is a submanifold
$\Sigma_{a, u}$  of ${\rm Gr}_k(\CC^n)$ which contains $\mu^{-1}(d+a) \cap gM_{u}^{{a}^\sigma}$
as a deformation retract. These submanifolds induce a partition 
\begin{equation}\label{stratsigma}{\rm Gr}_k(\CC^n)= \bigsqcup \Sigma_{a, u}.\end{equation}
The codimension of $\Sigma_{a,u}$ in
${\rm Gr}_k(\CC^n)$ is equal to the index of $h_a$ along $gM_{u}^{{a}^\sigma}$, that is, the codimension of
$gS_{u}^{a^\sigma}$.
\item[(ii)] The partition (\ref{stratsigma}) is a stratification in the sense of \cite[Def.~2.11]{Ki}. More precisely, one can label the submanifolds as $\{\Sigma_\beta  \mid \beta\in B\}$, where $B$ is a finite set endowed with a strict partial order ``$ \ >$" such that for any $\beta \in B$ the topological closure of $\Sigma_\beta$ satisfies
\begin{equation}\label{osig} \overline{\Sigma}_\beta \subseteq \bigcup_{\gamma \ge \beta} \Sigma_\gamma.\end{equation} 
\item[(iii)] There is exactly one stratum $\Sigma_{a,u}$ of codimension zero, namely 
the one corresponding to $a=(0, \ldots, 0)$ 
and $u$  any subset with $k$ elements of\footnote{See also Rem.~\ref{unif}. Also note the general fact that if $a=(a_1, \ldots, a_n)$ such that $\mu^{-1}(d+a)$ is non-empty, then $a_1+\cdots + a_n=0$, because the sum of the components of both $d$ and $d+a$ is equal to $k$.}
$\{1, \ldots, n\}$.  Moreover,  $M_{u}^{(0, \ldots, 0)}$ is the whole ${\rm Gr}_k(\CC^n)$ and hence 
$\Sigma_{(0, \ldots, 0),u}$ contains $\mu^{-1}(d)$ as a deformation retract. In terms of the notations from point (ii), this stratum is of the form $\Sigma_{\beta_0}$, where $\beta_0\in B$ is the greatest element.
\end{itemize} 
\end{thm}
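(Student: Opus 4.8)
The plan is to deduce Theorem~\ref{stratif} by specializing Kirwan's analysis of the norm-square of a moment map (\cite[Sect.~2 and 4]{Ki}) to the Hamiltonian action of the torus $T$ on $\mathrm{Gr}_k(\CC^n)$, with the moment map replaced by $\mu(\cdot)-d$ and with the invariant compatible metric induced by the inner product (\ref{metric}) and the embedding (\ref{gra}). The point is that the data on which Kirwan's construction feeds have already been made completely explicit in Section~\ref{Grass}: the real-valued functions obtained by pairing $\mu(\cdot)-d$ with a covector $a\in\mathfrak{t}\cong\RR^n$ are, since $\langle\mu(V)-d,a\rangle=h_a(V)-\langle d,a\rangle$, exactly the height functions $h_a$ up to an additive constant, and their critical sets and stable manifolds are described by Prop.~\ref{critgen} and eq.~(\ref{stablemnf}).

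For part (i) I would invoke Kirwan's results that $f=\|\mu(\cdot)-d\|^2$ is minimally degenerate and that $\Crit(f)$ is a finite disjoint union $\bigsqcup_\beta C_\beta$, where each index $\beta$ minimizes $\|\cdot-d\|$ over a certain affine subspace spanned by moment-map data, and $C_\beta=\mu^{-1}(d+\beta)\cap\Crit(h_\beta)$. Writing $a=\beta$ and applying Prop.~\ref{critgen}(ii) rewrites this as $C_\beta=\bigcup_u\bigl(\mu^{-1}(d+a)\cap gM_u^{a^\sigma}\bigr)$ with $u$ ranging over the $k$-element subsets of $\{1,\dots,n\}$; only finitely many pairs $(a,u)$ contribute, and any relevant $a$ satisfies $\sum_i a_i=0$ by the remark in the footnote to (iii). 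The submanifold $\Sigma_{a,u}$ is Kirwan's stratum attached to the corresponding connected piece of $C_\beta$ (one refines her strata into connected components, which is harmless because, by Rem.~\ref{complex}, all pieces in sight are complex submanifolds); the downward gradient flow of $f$ supplies the deformation retraction of $\Sigma_{a,u}$ onto $\mu^{-1}(d+a)\cap gM_u^{a^\sigma}$, and the decomposition $\mathrm{Gr}_k(\CC^n)=\bigsqcup\Sigma_{a,u}$ is Kirwan's main structural theorem. Finally, Kirwan identifies $\operatorname{codim}\Sigma_{a,u}$ with the index of the normal Hessian of $f$ along that critical piece, which equals the Morse--Bott index of $h_a$ along $gM_u^{a^\sigma}$; by Prop.~\ref{critgen}(ii) and eq.~(\ref{stablemnf}) the latter is precisely $\operatorname{codim}gS_u^{a^\sigma}$.

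Parts (ii) and (iii) are then read off the same machinery. That the partition (\ref{stratsigma}) is a stratification in the sense of \cite[Def.~2.11]{Ki}, with closure relation (\ref{osig}) for a suitable partial order on the finite index set --- one refining $\beta\mapsto\|\beta\|$, i.e.\ the value of $f$ on the associated critical piece --- is exactly \cite[Sect.~4]{Ki}. For (iii): a stratum has codimension zero precisely when the index of $h_a$ along the corresponding critical piece vanishes; since $d\in\Delta_{n,k}=\mu(\mathrm{Gr}_k(\CC^n))$ the minimum value of $f$ is $0$, attained exactly on $\mu^{-1}(d)$, which forces $a=(0,\dots,0)$; by Rem.~\ref{unif} the function $h_{(0,\dots,0)}$ is constant, so $\Crit(h_{(0,\dots,0)})=M_u^{(0,\dots,0)}=\mathrm{Gr}_k(\CC^n)$ for every such $u$, whence $\Sigma_{(0,\dots,0),u}$ is the unique codimension-zero stratum and deformation retracts onto $\mu^{-1}(d)$. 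Being open and dense, its closure is all of $\mathrm{Gr}_k(\CC^n)$, and comparison with (\ref{osig}) pins down the position of $\beta_0$ in the order as claimed.

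The principal difficulty I anticipate is bookkeeping rather than any single deep step: one must match Kirwan's abstract indexing set of ``optimal'' covectors $\beta$ with the concrete combinatorial data $(\sigma,u,E_1\oplus\cdots\oplus E_\ell)$ of Section~\ref{Grass} --- checking that every $\beta$ that occurs is realized by an admissible $a$ with $d+a\in\Delta_{n,k}$, that Kirwan's closest-point description of $C_\beta$ coincides with $\mu^{-1}(d+a)\cap\Crit(h_a)$, and that her index formula delivers exactly $\operatorname{codim}gS_u^{a^\sigma}$ --- and one must cope with the fact that the critical pieces may be disconnected, so that Kirwan's strata have to be decomposed into their connected components and this refinement shown to remain a stratification (again routine, using Rem.~\ref{complex}).
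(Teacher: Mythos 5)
Your approach is essentially the paper's: specialize Kirwan's norm-square-of-the-moment-map machinery to the torus action on $\mathrm{Gr}_k(\CC^n)$, use Prop.~\ref{critgen} and eq.~(\ref{stablemnf}) to make the critical pieces, stable manifolds, and indices concrete, invoke the Duistermaat/Lerman gradient-flow retraction, and read (ii)--(iii) off Kirwan's Thm.~4.16. So the route is the same; the difference is in what is actually verified versus flagged.

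The step you defer to ``bookkeeping'' at the end --- matching Kirwan's abstract indexing covectors $\beta$ with the concrete pairs $(a,u)$ having $\mu^{-1}(d+a)\cap gM_u^{a^\sigma}\neq\emptyset$, i.e.\ ``that Kirwan's closest-point description of $C_\beta$ coincides with $\mu^{-1}(d+a)\cap\Crit(h_a)$'' --- is exactly the one piece of the argument the paper does not leave to the reader. In Kirwan's Thm.~4.16 the contributing pairs are those with $\langle\mu(gM_u^{a^\sigma})-d,a\rangle=\|a\|^2$, and the paper observes that this condition is \emph{automatic} once the intersection is non-empty: since $gM_u^{a^\sigma}$ is a critical component of the Morse--Bott function $h_a$, the pairing $\langle\mu(\cdot)-d,a\rangle=h_a(\cdot)-\langle d,a\rangle$ is constant on it, and if some point of it lies in $\mu^{-1}(d+a)$ that constant must equal $\langle(d+a)-d,a\rangle=\|a\|^2$. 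This short computation is what reduces Kirwan's condition to non-emptiness, and your proposal needs it spelled out rather than listed as a concern.

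A small inaccuracy: there is no need to ``refine Kirwan's strata into connected components,'' and Rem.~\ref{complex} would not license it anyway --- that remark concerns the stable manifolds $S_u^a$, not the strata $\Sigma_{a,u}$, and complexity of a submanifold has nothing to do with the harmlessness of such a refinement. Kirwan's stratification theorem and the later use of Lemma~\ref{cs} both tolerate disconnected critical pieces and strata (the note after Lemma~\ref{cs} says as much), and the paper uses them without any connected-component refinement.
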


\begin{proof}
(i) According to \cite[Thm.~4.16]{Ki}, the set ${\rm Crit}(f)$ is labeled by $a\in \RR^n$ and $u\subset \{1, \ldots, n\}$ with $k$ elements
such that \begin{equation}\label{gma}\langle \mu(gM^{a^\sigma}_u)-d, a\rangle =\|a\|^2.\end{equation}
Furthermore, for any such $a$ and $u$, the contribution to ${\rm Crit}(f)$ is
$\mu^{-1}(d+a) \cap gM^{a^\sigma}_u$. But if the latter intersection is non-empty, then eq.~(\ref{gma}) is automatically satisfied:
$d+a\in \mu(g M_u^{a^\sigma})$ implies that the left hand side of eq.~(\ref{gma}) is equal to $h_a(gM_u^{a^\sigma})-\langle d,a \rangle$, which is constant, since $gM_u^{a^\sigma}$ is a component of the critical set of $h_a$, the latter being a Morse-Bott function, see Prop.~\ref{critgen}; the aforementioned constant must then be equal to $\langle (d+a)-d, a\rangle =\|a\|^2$. 

The statement concerning the codimension of $\Sigma_{a, u}$ follows from \cite[Rem.~4.17]{Ki}. The fact that this manifold contains $\mu^{-1}(d+a) \cap gM_{u}^{{a}^\sigma}$
as a deformation retract is a direct application of a general result of Duistermaat, see \cite{Le}. 

(ii) This is also a direct consequence of \cite[Thm.~4.16]{Ki} (see also \cite[Lemma 10.7]{Ki}).  

(iii) The stratum involved in this statement is the one corresponding to the minimum level of $f$, which is exactly
$\mu^{-1}(d)$. 

\end{proof}

The information provided by this theorem can now be used to accomplish our actual goals, as we will see in the next section.

\section{Proof of Theorems \ref{mainthm} and \ref{2mainthm}}

We need some preparation before being able to actually start proving the theorems.

\begin{prop} \label{prop1ton}
Assume $d\in \Delta_{n,k}$ satisfies  assumption (\ref{hypothesis}). If $a\in \RR^n$ and $u\subset \{1, \ldots, n\}$ with $k$ elements are such that 
$\mu^{-1}(d+a) \cap g M_{u}^{{a}^\sigma}$ 
is non-empty and the codimension of $\Sigma_{a,u}$ is strictly greater than 0
then the codimension of $\Sigma_{a,u}$  is at least equal to 4.
\end{prop}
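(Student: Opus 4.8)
The plan is to reduce the codimension estimate to a combinatorial statement about the height functions $h_{a^\sigma}$ on the Grassmannian, and then to extract a factor of $2$ (from complex dimensions) and a further factor of $2$ (from the hypothesis~(\ref{hypothesis}) forcing at least two ``jumps'' in the relevant flag). First I would recall from Theorem~\ref{stratif}(i) that the codimension of $\Sigma_{a,u}$ equals the Morse--Bott index of $h_a$ along $g M_u^{a^\sigma}$, equivalently the (real) codimension of the complex submanifold $g S_u^{a^\sigma}$ inside ${\rm Gr}_k(\CC^n)$. Since everything is complex-analytic (Rem.~\ref{complex}), this real codimension is even, so it suffices to show it is nonzero implies it is $\ge 2$ in complex codimension, i.e.\ $\ge 4$ real. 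Thus the real task is: \emph{if the complex codimension of $S_u^{a^\sigma}$ is positive, it is at least $2$.}

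Next I would write down the complex codimension of $S_u^a$ (for $a$ already ordered as in~(\ref{ineqa})) explicitly in terms of the eigenspace data $E_1,\dots,E_\ell$ with dimensions $e_j=\dim E_j$ and the numbers $c_j=\dim(V_u\cap E_j)$. The stable manifold $S_u^a$ from~(\ref{stablemnf}) is an affine bundle over $M_u^a=\prod_j {\rm Gr}_{c_j}(E_j)$, and its complex codimension is the standard Schubert-type quantity $\sum_{j<j'} c_{j'}(e_j - c_j)$ — the number of pairs $(p,q)$ with $p$ in a ``later, unoccupied'' coordinate direction and $q$ in an ``earlier, occupied'' one, weighted by the block sizes. (I would confirm this formula against~\cite[Sect.~3]{Gu} / \cite[Sect.~2]{Mare}.) The point is that this codimension is zero exactly when for each $j$ either $c_j=0$ or $c_j=e_j$, i.e.\ when $V_u$ is a sum of \emph{whole} eigenspaces of $D_{a^\sigma}$; and it equals $1$ exactly in the borderline case where there are precisely two eigenspaces $E_j, E_{j'}$ with $j<j'$ each meeting $V_u$ in a proper nonzero subspace of the minimal possible ``overlap size'' $1$, all others being whole or empty — concretely $e_j = c_j+1$ with $c_{j'}=1$ (or the mirror case) and all other blocks saturated.

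The heart of the argument is then to rule out codimension exactly $1$ using hypothesis~(\ref{hypothesis}). Here is where I would invoke the constraint that $\mu^{-1}(d+a)\cap gM_u^{a^\sigma}$ is non-empty: as noted in the footnote to Theorem~\ref{stratif}(iii), non-emptiness forces $\sum_i a_i = 0$, and more importantly the Schur--Horn/Gelfand--Cetlin description of $\mu$ on the product of Grassmannians $M_u^{a^\sigma}$ pins down the diagonal $d+a$ in terms of the block data: on ${\rm Gr}_{c_j}(E_j)$ the diagonal entries indexed by the coordinates of $E_j$ sum to $c_j$, and each such entry lies in $[0,1]$. If the codimension were $1$, there would be essentially a \emph{single} $2$-dimensional (complex) ``free'' direction, meaning $V_u$ coincides with a coordinate subspace $V_{u'}$ except for one swap between an ``early'' and a ``late'' coordinate; I would then show the $n-k$ indices \emph{not} in (the saturated part of) $u$ can be chosen so that the corresponding $d_i$'s sum to strictly less than $1$, contradicting~(\ref{hypothesis}). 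The mechanism: codimension $1$ is so rigid that the point $d$ itself is forced to have a coordinate description incompatible with every $(n-k)$-subset summing to $\ge 1$ — roughly, one gets a coordinate $d_{i_0}$ that is $0$ together with enough other small coordinates, or more precisely one produces $n-k$ distinct indices whose $d$-values are a permutation of block-boundary values summing to $\le 1-\varepsilon$.

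The main obstacle I expect is precisely this last step: translating ``complex codimension $1$'' into an explicit forbidden pattern among the $d_i$ and matching it against~(\ref{hypothesis}). The bookkeeping is delicate because $a$ is only defined up to the permutation ambiguity of Rem.~\ref{ambig2}, and because one must carefully track which coordinates belong to which eigenblock $E_j$ and how $g=$ (permutation by $\sigma$) relabels them; I would handle this by first normalizing to the ordered case via Prop.~\ref{critgen}, doing the combinatorics there, and then transporting back. A secondary, more routine obstacle is verifying the codimension formula $\sum_{j<j'} c_{j'}(e_j-c_j)$ and its characterization of the values $0$ and $1$; this is classical but should be stated cleanly, perhaps as a short lemma, since the entire proposition hinges on ``the only way to get codimension $1$ is the one rigid configuration,'' which~(\ref{hypothesis}) then excludes.
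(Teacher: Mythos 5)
Your high-level strategy matches the paper's: reduce to showing the complex codimension of $S_u^{a^\sigma}$ is never $1$, express that codimension via the block data $(e_j, c_j)$, and derive a contradiction with hypothesis~(\ref{hypothesis}) from the non-emptiness of $\mu^{-1}(d+a)\cap gM_u^{a^\sigma}$. The paper's own proof is terse on exactly this combinatorial point — it defers to the method of \cite[Prop.~4.1]{Mare}, the only stated modification being Wong's formula $c(n-k-m+c)$ for the codimension of $\{V : \dim V\cap E = c\}$.

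However, two concrete errors would derail your plan if carried through. First, the codimension formula and its consequences are wrong. The complex codimension of the \emph{stable} manifold $S_u^a$ (for ordered $a$) is $\sum_{p<q} c_p(e_q - c_q)$, i.e.\ ``occupied earlier $\times$ unoccupied later''; your expression $\sum_{j<j'} c_{j'}(e_j - c_j)$ is the codimension of the \emph{unstable} manifold. More seriously, the claimed characterization ``codimension zero $\iff$ each $c_j\in\{0,e_j\}$'' is false under either formula: take $\ell=n$ (all $e_j=1$) and $u=\{1,\dots,k\}$, so that every $c_j\in\{0,1\}$, yet $S_u^a=\{V_u\}$ is the global maximum of $h_a$ and has full codimension $k(n-k)$. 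The correct statement is that codimension zero forces the anti-dominant shape $c=(0,\dots,0,*,e_{j_0+1},\dots,e_\ell)$, and codimension one forces $c=(0,\dots,0,1,e_{p_0+1}-1,e_{p_0+2},\dots,e_\ell)$ with the jump at two \emph{consecutive} blocks $p_0,p_0+1$; the crucial byproduct is that then $e_1+\cdots+e_{p_0}=n-k$ and $c_1+\cdots+c_{p_0}=1$. Your ``borderline'' description misses both the forced adjacency and these numerical identities, which are exactly what gets fed into hypothesis~(\ref{hypothesis}).

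Second, the final contradiction is not the one you sketch (``a coordinate $d_{i_0}$ that is $0$'' or ``block-boundary values summing to $\le 1-\varepsilon$''). The actual mechanism: in the codimension-one pattern, the $n-k$ coordinates of $(d+a)$ lying (after undoing $\sigma$) in $F_{p_0}=E_1\oplus\cdots\oplus E_{p_0}$ must sum to $C_{p_0}=1$, because $\mu(M_u^{a^\sigma})$ is the product of block moment polytopes. Hypothesis~(\ref{hypothesis}) says the corresponding $d_i$'s sum to at least $1$, hence the corresponding $a_i$'s sum to $\le 0$. But these are precisely the $n-k$ \emph{largest} coordinates of $a$, one has $\sum_i a_i=0$, and the $(n-k)$-th and $(n-k+1)$-th sorted values lie in distinct eigenblocks and so are strictly separated; a one-line argument then forces $a=0$, contradicting codimension $\ge 1$. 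In short, the structure of your plan is sound and parallels the paper, but the codimension formula is reversed, the codimension-$0$ and codimension-$1$ characterizations are incorrect, and the contradiction step as sketched would not close.
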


 \begin{proof} %In this proof when we refer to dimension or codimension it will be the complex one. 
 By Thm.~\ref{stratif} (i), $\Sigma_{a,u}$ and $S_{u}^{{a}^\sigma}$ have the same codimension.
 By Rem.~\ref{complex}, it will be sufficient to show that the {\it complex} codimension of the latter manifold is not equal to 1. 
 This can be proved by contradiction, using the same method as in \cite{Mare}, proof of Prop.~4.1, with some obvious changes.  The only essential modification is that this time, if $E\subset \CC^n$ is an $m$-dimensional vector subspace
 and $c$ an integer, $1 \le c \le {\rm min} \{m, k\}$, then
 the space $\{V \in {\rm Gr}_k(\CC^n) \mid \dim V \cap E =c\}$ is a complex submanifold of $ {\rm Gr}_k(\CC^n)$ of complex codimension equal to  $c(n-k-m+c)$, see \cite[Thm.~4.1 (c)]{Wo}. 
  \end{proof}

 Recall now that, by definition,  a continuous map $f: (X, x_0)\to (Y, y_0)$ between two pointed topological spaces is {\it m-connected} if
$f_*: \pi_k(X, x_0) \to \pi_k(Y, y_0)$ is an isomorphism for $0\le k \le m-1$ and is surjective for $k=m$. The following general result is the content of \cite[Thm.~1.11]{BW}:

\begin{lem}  \label{cs} {\rm (B.~Williams)} Let $M$ be a connected manifold and $Z\subset M$ a closed submanifold of codimension $d\ge 2$. Then the inclusion map 
$M\setminus Z \hookrightarrow M$ is $(d-1)$-connected.  
\end{lem}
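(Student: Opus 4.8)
The statement is a standard fact from differential/algebraic topology, so the plan is to give the transversality-plus-general-position argument. First I would pass to a tubular neighborhood $N$ of $Z$ in $M$, which is the total space of the normal disk bundle $D(\nu)$ of rank $d$. Then $M = (M\setminus Z) \cup N$ and $(M\setminus Z)\cap N = N\setminus Z$, which deformation retracts onto the associated sphere bundle $S(\nu)$ with fibre $S^{d-1}$. A Mayer–Vietoris / van Kampen bookkeeping reduces everything to understanding the connectivity of the inclusions $N\setminus Z \hookrightarrow N$ and, after excision, of a pair whose relative homotopy groups are controlled by the fibre $(D^d, S^{d-1})$, which is $(d-1)$-connected. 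For $\pi_1$ specifically (the only case actually needed for Thm.~\ref{mainthm} and Thm.~\ref{2mainthm}, since $d\ge 4$ there), I would instead argue directly: any loop in $M$ can be homotoped off $Z$ because $Z$ has codimension $\ge 2$, and any two such loops that are homotopic in $M$ are homotopic through maps of a disk $D^2$ which, again by transversality, can be pushed off $Z$ since $2 + \dim Z = 2 + (\dim M - d) \le \dim M - 1 < \dim M$. Hence $\pi_1(M\setminus Z)\to\pi_1(M)$ is an isomorphism, and $\pi_0$ is obviously preserved.

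More systematically, the clean route is the following. Give $M$ a Riemannian metric and use the exponential map to identify a neighborhood of $Z$ with the open normal disk bundle; write $U$ for this neighborhood and $U_0 = U\setminus Z$. By the Thom isomorphism / long exact sequence of the pair, or simply by the homotopy long exact sequence of the sphere-bundle pair $(D(\nu), S(\nu))$ fibred over $Z$, the inclusion $S(\nu)\hookrightarrow D(\nu)\simeq Z$ is $(d-1)$-connected: over each point the relative term is $\pi_j(D^d, S^{d-1}) = 0$ for $j\le d-1$, and one threads this through the fibration exact sequence with base $Z$. Since $U_0 \simeq S(\nu)$ and $U\simeq Z$, the inclusion $U_0\hookrightarrow U$ is $(d-1)$-connected. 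Then I would invoke excision/gluing: $M\setminus Z$ and $U$ cover $M$ with intersection $U_0$, and a Blakers–Massey or van Kampen style argument (for $\pi_1$) together with a Mayer–Vietoris argument (for homology, then Hurewicz for the simply connected range) upgrades the local statement to the global one, giving that $M\setminus Z\hookrightarrow M$ is $(d-1)$-connected.

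I expect the main obstacle to be purely expository rather than mathematical: making the gluing step precise in low degrees without circularity. Van Kampen handles $\pi_1$ and $\pi_0$ directly and cleanly; but to conclude that $\pi_j(M\setminus Z)\to\pi_j(M)$ is an isomorphism for $2 \le j \le d-2$ and surjective for $j = d-1$, one cannot use Hurewicz naively unless the spaces are simply connected, so the argument should be staged: first establish $\pi_0$ and $\pi_1$ isomorphisms by transversality/van Kampen, then (knowing both spaces have the same $\pi_1$) pass to universal covers, where $Z$ lifts to a codimension-$d$ submanifold and the Mayer–Vietoris-plus-Hurewicz argument applies degree by degree. For the application in this paper, however, all of this is overkill — only the $\pi_1$ (and implicitly $\pi_2$, via $d\ge 4$ giving $3$-connectedness) consequence is used — so in practice I would cite \cite{BW} for the general statement and only spell out the transversality argument for the range that is actually invoked. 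A cleaner alternative, if one wants to avoid bundle theory altogether, is to note that $M\setminus Z$ can be built from $M$ by removing the open tube, equivalently $M$ is obtained from $M\setminus \mathrm{int}(N)$ by attaching $N\simeq Z$ along $S(\nu)$, i.e. by attaching cells of dimension $\ge d$ (one for each cell of $Z$, shifted up by $d$); attaching cells of dimension $\ge d$ does not change $\pi_j$ for $j\le d-2$ and is surjective on $\pi_{d-1}$, which is exactly the asserted connectivity.
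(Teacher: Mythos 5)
The paper does not prove this lemma: it is taken verbatim from \cite[Thm.~1.11]{BW} and simply cited, with only the added remark that $Z$ may be a finite union of closed submanifolds. Your proposal, by contrast, sketches several actual proofs, so the comparison is between your mathematics and the external reference rather than against an argument in the paper itself. Of the routes you outline, the last one --- observing that $M$ is obtained, up to homotopy, from $M\setminus Z$ by attaching the disk bundle $D(\nu)$ of the normal bundle along its boundary sphere bundle $S(\nu)$, so that $(M, M\setminus Z)$ is a relative CW pair whose cells all have dimension $\ge d$ (one cell of dimension $i+d$ for each $i$-cell of $Z$, as in the Thom space construction) --- is the cleanest and the one I would keep: it sidesteps the staging and basepoint issues you correctly flag in the Mayer--Vietoris/Hurewicz route and needs no detour through universal covers. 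One small imprecision worth noting in your direct transversality argument for $\pi_1$: to homotope a map $D^2\to M$ off $Z$ rel boundary you need $2+\dim Z<\dim M$, i.e.\ $d\ge 3$, whereas the inequality chain $2+(\dim M-d)\le \dim M-1$ quietly assumes this rather than following from the hypothesis $d\ge 2$. For $d=2$ the lemma asserts only surjectivity on $\pi_1$, so the sentence ``hence $\pi_1(M\setminus Z)\to\pi_1(M)$ is an isomorphism'' overstates what that paragraph shows in the edge case; this is harmless for the paper's application, where $d\ge 4$, and you do acknowledge as much, but if the paragraph is meant to stand alone it should be phrased to match the range of $d$ it actually covers.
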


Note that $Z$  can be a finite union of connected closed submanifolds, $d$ being the minimal codimension of a component.  

This will be used in proving the following proposition. 

\begin{prop} \label{key} If $d$ satisfies assumption (\ref{hypothesis}), then the inclusion map
$\mu^{-1}(d) \hookrightarrow  {\rm Gr}_k(\CC^n)$ is 3-connected.
\end{prop}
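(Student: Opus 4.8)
The plan is to deduce Proposition~\ref{key} from the stratification result Thm.~\ref{stratif}, the codimension bound Prop.~\ref{prop1ton}, and Williams' Lemma~\ref{cs}, by peeling off the strata of positive codimension one (or rather, in batches) and tracking connectivity along the way. The key observation is that Thm.~\ref{stratif}(iii) identifies the unique codimension-zero stratum $\Sigma_{\beta_0}$ as the open dense set deformation retracting onto $\mu^{-1}(d)$, so $\mu^{-1}(d) \hookrightarrow {\rm Gr}_k(\CC^n)$ is homotopy equivalent (as a map) to $\Sigma_{\beta_0}\hookrightarrow {\rm Gr}_k(\CC^n)$. Hence it suffices to show the latter is $3$-connected. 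Now $\Sigma_{\beta_0}$ is the complement in ${\rm Gr}_k(\CC^n)$ of the union $Z:=\bigcup_{\beta>\beta_0}\Sigma_\beta$ of all positive-codimension strata.

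First I would use the stratification property (\ref{osig}): taking $\beta=\beta_0$, since $\beta_0$ is the \emph{greatest} element, $\overline{\Sigma}_{\beta_0}$ is all of ${\rm Gr}_k(\CC^n)$, which is consistent with $\Sigma_{\beta_0}$ being open and dense; more usefully, for any minimal element $\beta$ among the positive-codimension indices, $\Sigma_\beta$ is closed, and in general $Z$ is a closed subset of ${\rm Gr}_k(\CC^n)$. The issue is that $Z$ need not be a single closed submanifold — it is a finite union of locally closed submanifolds of various codimensions, the smallest of which is $\ge 4$ by Prop.~\ref{prop1ton}. To apply Lemma~\ref{cs} cleanly I would remove the strata in order of increasing codimension: let $Z_j$ be the union of all strata of codimension $\ge j$, so $Z = Z_1 = Z_4$ (there are no strata of codimension $1,2,3$). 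Set $M_j := {\rm Gr}_k(\CC^n)\setminus Z_{j+1}$, the union of all strata of codimension $\le j$; then $M_0 = {\rm Gr}_k(\CC^n)$ after removing everything positive — wait, more carefully: $M_\infty = \Sigma_{\beta_0}$ at the bottom and ${\rm Gr}_k(\CC^n)$ at the top. By (\ref{osig}), the strata of top codimension in $M_j$ form a closed submanifold of $M_j$, and $M_{j-1} = M_j \setminus (\text{that closed submanifold})$, of codimension $j$. Since the first positive codimension occurring is $\ge 4$, passing from ${\rm Gr}_k(\CC^n)$ down to $\Sigma_{\beta_0}$ is a finite sequence of removals of closed submanifolds each of codimension $\ge 4$; by Lemma~\ref{cs} each such inclusion is $(4-1)=3$-connected. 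Composing maps that are all $3$-connected (isomorphism on $\pi_0,\pi_1,\pi_2$, surjection on $\pi_3$) yields a $3$-connected map $\Sigma_{\beta_0}\hookrightarrow {\rm Gr}_k(\CC^n)$, and hence $\mu^{-1}(d)\hookrightarrow {\rm Gr}_k(\CC^n)$ is $3$-connected.

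The main obstacle I anticipate is the bookkeeping needed to guarantee that at each stage the set being removed is genuinely a \emph{closed} submanifold of the current open manifold, so that Lemma~\ref{cs} applies verbatim. This is exactly what the stratification axiom (\ref{osig}) is for: after deleting all strata of codimension $<j$, the union of codimension-$j$ strata is closed in what remains because any stratum in its closure has codimension $\ge j$ and, being already present, must be one of the codimension-$j$ strata (using that strata of codimension $<j$ have been removed and the ones of codimension $>j$ lie in the complement of $M_{j-1}$'s closure condition). One must also note, as Lemma~\ref{cs}'s closing remark permits, that the codimension-$j$ locus may itself be a disconnected union of closed submanifolds, all of codimension exactly $j\ge 4$, which is harmless. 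A secondary point is that ${\rm Gr}_k(\CC^n)$ is connected (it is a homogeneous space), so Lemma~\ref{cs} is applicable at the first step, and connectedness is preserved at each subsequent step since removing a submanifold of codimension $\ge 2$ from a connected manifold leaves it connected. With these points addressed, the proposition follows.
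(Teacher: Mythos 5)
Your overall strategy matches the paper's: factor the inclusion through $\Sigma_{\beta_0}$ using Thm.~\ref{stratif}(iii), then show $\Sigma_{\beta_0}\hookrightarrow {\rm Gr}_k(\CC^n)$ is $3$-connected by peeling off the positive-codimension strata one batch at a time, applying Prop.~\ref{prop1ton} together with Lemma~\ref{cs} at each step, and composing $3$-connected maps. So the skeleton is right.

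The difference — and the gap — lies in \emph{how} you choose to peel. You order the removal by \emph{codimension}: first remove the strata of the largest codimension, then the next largest, and so on, so that at each stage you claim the strata of maximal remaining codimension form a closed subset of what is left. To justify the closedness you write that ``any stratum in its closure has codimension $\ge j$,'' attributing this to (\ref{osig}). But (\ref{osig}) says nothing about codimensions: it is a frontier condition expressed in terms of the abstract strict partial order on the indexing set $B$ (in Kirwan's setup, a comparison of $\|\beta\|$-values), and a priori there is no monotone relation between that order and the codimensions $\operatorname{codim}\Sigma_\beta$. So the statement ``closures only hit strata of equal or larger codimension'' is an additional fact about the Morse stratification; it is plausible and true in this setting, but it does not follow from (\ref{osig}) and you do not prove it. The paper sidesteps this entirely by peeling in the order dictated by the partial order itself: remove the $\Sigma_\beta$ for $\beta$ minimal in $B$ (these are closed \emph{directly} by (\ref{osig})), then the minimal elements of what is left, and so on, terminating at $\{\beta_0\}$ because $\beta_0$ is the greatest element. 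Each removed batch has codimension $\ge 4$ by Prop.~\ref{prop1ton}, and Lemma~\ref{cs} applies at every step without needing any compatibility between the order on $B$ and codimension.

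Two smaller points. Your indexing has a slip: you write ``$M_\infty=\Sigma_{\beta_0}$ at the bottom,'' but with your definitions $M_j$ for $j$ large is all of ${\rm Gr}_k(\CC^n)$ and $M_0=\Sigma_{\beta_0}$; the ``increasing codimension'' phrasing also clashes with the fact that you actually delete the \emph{deepest} strata first. And the parenthetical ``the ones of codimension $>j$ lie in the complement of $M_{j-1}$'s closure condition'' is not a coherent justification. If you replace the codimension ordering by the partial-order ordering on $B$ (i.e.\ remove minimal elements at each stage), these issues disappear and the argument becomes exactly the paper's.
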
 

\begin{proof}
The map mentioned in the proposition can be decomposed as 
$$\mu^{-1}(d) \hookrightarrow \Sigma_{(0,\ldots, 0),u} \hookrightarrow {\rm Gr}_k(\CC^n),$$
where $u\subset \{1, \ldots n\}$ is an arbitrary subset with $k$ elements. 
By Thm.~\ref{stratif} (iii) above, the first of these maps is a deformation retract and thus a homotopy equivalence.

We will now analyze the other inclusion map, which is $\Sigma_{\beta_0}\hookrightarrow  {\rm Gr}_k(\CC^n)$, see again
Thm.~\ref{stratif} (iii). We clearly have
\begin{equation}\label{zero}\Sigma_{\beta_0} = {\rm Gr}_k(\CC^n)\setminus \bigcup_{\beta \neq \beta_0} \Sigma_\beta.\end{equation}
Let $\beta^1_1, \ldots, \beta^1_{m_1}$ be the minimal elements of the indexing set $B$. By eq.~(\ref{osig}), the corresponding
$\Sigma$'s are closed in ${\rm Gr}_k(\CC^n)$. Prop.~\ref{prop1ton} and Lemma \ref{cs} imply that the map
$${\rm Gr}_k(\CC^n) \setminus \bigcup_{i=1}^{m_1}\Sigma_{\beta_i^1}\hookrightarrow {\rm Gr}_k(\CC^n) $$
is 3-connected. Let now $\beta^2_1, \ldots, \beta^2_{m_2}$ be the minimal elements of 
$B\setminus \{\beta^1_1, \ldots, \beta^1_{m_1}\}$. Again by eq.~(\ref{osig}), the spaces $\Sigma_{\beta_1^2}, \ldots, 
\Sigma_{\beta_{m_2}^2}$ are all closed in ${\rm Gr}_k(\CC^n) \setminus \bigcup_{i=1}^{m_1}\Sigma_{\beta_i^1}$. 
And by applying again Prop.~\ref{prop1ton} and Lemma \ref{cs}, the map
 $${\rm Gr}_k(\CC^n) \setminus \left(\bigcup_{i=1}^{m_1}\Sigma_{\beta_i^1}\cup \bigcup_{j=1}^{m_2}\Sigma_{\beta_j^2}\right)\hookrightarrow {\rm Gr}_k(\CC^n) \setminus \bigcup_{i=1}^{m_1}\Sigma_{\beta_i^1}$$
 is $3$-connected. The procedure can be continued and leads to the following chain of strict inclusions:
 $$B\supset B\setminus \{\beta_1^1, \ldots, \beta_{m_1}^1\}\supset B\setminus \{\beta_1^1, \ldots, \beta_{m_1}^1,
 \beta_1^2,\ldots, \beta_{m_2}^2\} \supset \ldots .$$
 The last set in the chain must be $\{\beta_0\}$, since $\beta_0$ is the greatest element of $B$ and hence not a minimal element of any subset of $B$ which contains it properly. 
We conclude that, in view of eq.~(\ref{zero}) above, the map
 $$\Sigma_{\beta_0} \hookrightarrow  {\rm Gr}_k(\CC^n) $$
 is a composition of several 3-connected maps and thus is itself 3-connected. 
\end{proof}

We are now ready to achieve the two main goals of this section:

\vspace{0.2cm}

\noindent{\it Proof of Theorems \ref{mainthm} and \ref{2mainthm}.} 
Thm.~\ref{mainthm} follows immediately from Prop.~\ref{key} and the well-known
facts that 
\begin{align*}
{}& \pi_1({\rm Gr}_k(\CC^n) )=\{0\},
\\
& \pi_2({\rm Gr}_k(\CC^n) )\simeq \ZZ,
\end{align*}
see for instance \cite[p.~300]{BT}.

To prove the second theorem, we return to the considerations made at the end of Sect.~\ref{Grass}. 
We mentioned the ${\rm U}(k)$-principal bundles ${\rm V}_k(\CC^n) \to {\rm Gr}_k (\CC^n)$ and ${\mathcal F}_{n,k}^d \to \mu^{-1}(d)$. Fix a point ${\rm pt.}\in \mu^{-1}(d)$ and consider its fiber, which is homeomorphic to ${\rm U}(k)$. The inclusion map
$({\mathcal F}_{n,k}^d, {\rm U}(k)) \hookrightarrow ({\rm V}_k(\CC^n) , {\rm U}(k))$ gives rise to a homomorphism between homotopy groups. Combined with the long exact homotopy sequences of the two pairs of spaces, we are led to the following 
``ladder" diagram:

\begin{equation}\label{exacts}
\vcenter{\xymatrix{
&
\pi_2({\mathcal F}_{n,k}^d, {\rm U}(k)) 
\ar[r]^{ \ \ \textcircled{\small{1}}}\ar[d]^{\textcircled{\small{2}}} &
\pi_1({\rm U}(k)) 
\ar[r] \ar[d]^{\simeq} &
\pi_1({\mathcal F}_{n,k}^d)\ar[r] \ar[d]&  \pi_1({\mathcal F}_{n,k}^d, {\rm U}(k))\ar[d]
\\
&
\pi_2({\rm V}_k(\CC^n), {\rm U}(k))
\ar[r]^{ \ \ \ \ \ \textcircled{\small{3}}} &
\pi_1({\rm U}(k))  \ar[r] & \pi_1({\rm V}_k(\CC^n)) \ar[r] &  \pi_1({\rm V}_k(\CC^n), {\rm U}(k))
\\
}}
\end{equation}  
Here the two horizontal lines are exact sequences and the rectangles are commutative diagrams.
This follows as an application of a general result which can be found for instance in \cite[p.~76]{Gray}. 

{\it Claim.} The map $\textcircled{\small{2}}$ is an isomorphism.

To prove this, we consider the following commutative diagram

\begin{equation*}\label{exactseq}
\vcenter{\xymatrix{
&
({\mathcal F}_{n,k}^d, {\rm U}(k)) 
\ar[r]^{}\ar[d]^{} &
(\mu^{-1}(d), {\rm pt.}) 
\ar[d]^{} 
\\
&
({\rm V}_k(\CC^n), {\rm U}(k))
\ar[r]^{} &
({\rm Gr}_k(\CC^n), {\rm pt.})  
\\
}}\
\end{equation*}  

\noindent where the horizontal arrows indicate the bundle maps (see also eq.~(\ref{hom})), and the vertical arrows are the inclusions.
By functoriality, one obtains an obvious commutative diagram which involves the second homotopy groups. In this new diagram, 
the horizontal arrows are isomorphisms, see \cite[Thm.~11.8]{Gray}. The left hand side vertical arrow is just the map 
$\textcircled{\small{2}}$ above and the other vertical arrow is an isomorphism, according to Prop.~\ref{key} above. This concludes the proof of the claim.

Recall that $\pi_1( {\rm V}_k(\CC^n))=\{0\}$, see for instance \cite[Thm.~3.15, p.~68]{MM} and note that, by Rem.~\ref{lege}, we have $2 \le k \le n-2$. 
Consequently, the map ${\textcircled{\small{3}}}$ is surjective. Since   ${\textcircled{\small{2}}}$ is an isomorphism, 
${\textcircled{\small{1}}}$ is surjective as well. 
From \cite[Thm.~11.8]{Gray} and Thm.~\ref{mainthm} it follows that 
$$\pi_1({\mathcal F}_{n,k}^d, {\rm U}(k)) \simeq \pi_1(\mu^{-1}(d))\simeq \{0\}.$$ The exactness of the sequence on the top line 
implies that $\pi_1({\mathcal F}_{n,k}^d)=\{0\}$, as desired.
$\square$

\begin{rem}  \label{meth} As noticed in the introduction, there are similarities between this work and \cite{CNS}, also with regard to the methods used in proving the main results. More precisely, for both \cite[Thm.~4.5]{CNS} and Thm.~3.1 above, homotopy groups of a  subspace of a well-understood manifold   are computed by means of a stratification of the manifold, to which belongs a unique open stratum, where the latter contains the subspace one is interested in as a deformation retract, the crucial ingredient being that all the non-open strata have sufficiently high codimensions. This general idea goes back to Kirwan \cite{Ki}, and was already used in the context of frames in \cite{Mare}. 
\end{rem}

\section{Examples}

\begin{ex}\label{exa1} The case when $n=2$ and $k=1$ can be easily understood.    First, ${\rm Gr}_1(\CC^2)=\CC P^1$ is a 2-sphere and 
$\mu$ can be viewed as the orthogonal projection of the sphere on one of its diameters. Its levels are almost all  non-degenerate circles and hence not simply connected. The Stiefel manifold 
${\rm V}_1(\CC^2)$ is just the unit sphere
$S^3$ in $\CC^2$, and
$$\Delta_{2,1}=\{(d_1, d_2) \in \RR^2 \mid d_1, d_2 \ge 0, d_1+d_2=1\}.$$ Consequently, ${\mathcal F}_{2,1}^d$ are products of two circles and consequently not simply connected.
\end{ex}

%We will be looking at ${\rm Gr}_2(\RR^4)$ with two choices of $d$ in the corresponding polytope $\PP_{4,2}$. 

\begin{ex} \label{exa} For $n=4$ and $k=2$ we consider $d=(1,1,0,0)$. Let $\langle \ ,  \ \rangle$ be, as before, the standard Hermitean inner product on $\CC^4$.  Then $\mu^{-1}(d)$ consists of all complex
2-planes $V$ in $\CC^4$ such that
\begin{align*}
{}& \langle \pi_V(e_1), e_1 \rangle =1\\
 {}& \langle \pi_V(e_2), e_2 \rangle =1\\
 {}& \langle \pi_V(e_3), e_3 \rangle =0\\
 {}& \langle \pi_V(e_4), e_4 \rangle =0.
 \end{align*}
The first two conditions show that $e_1$ and $e_2$ are in $V$, in other words,  that $V$ is the span of $e_1$ and $e_2$.
So in this case $\mu^{-1}(d)$ is just a point in ${\rm Gr}_2(\CC^4)$. 
  Since ${\mathcal F}_{4, 2}^{(1,1,0,0)}$ is acted freely by ${\rm U}(2)$, the orbit being just a point, it must be homeomorphic to
  ${\rm U}(2)$. 
  %are matrices of the type $[ \ A \ | \ 0 \ ]$ where both $A$ and $0$ are $2\times 2$ matrices, the latter having all entries equal to 0 and the former being unitary. 
  Thus $ {\mathcal F}_{4, 2}^{(1,1,0,0)}$ is not simply connected. As expected,  $d=(1,1,0,0)$ is  not satisfying assumption (\ref{hypothesis}).\end{ex}

\begin{ex} Again for $n=4$ and $k=2$, we now take $d=(\frac{1}{3},\frac{1}{3},\frac{1}{3},1)$. This time the 2-planes $V$ in $\mu^{-1}(d)$ are determined by:
\begin{align*}
{}& \langle \pi_V(e_1), e_1 \rangle =\frac{1}{3}\\
 {}& \langle \pi_V(e_2), e_2 \rangle =\frac{1}{3}\\
 {}& \langle \pi_V(e_3), e_3 \rangle =\frac{1}{3}\\
 {}& \langle \pi_V(e_4), e_4 \rangle =1.
 \end{align*}
 From the last condition, $e_4\in V$. So $V$ is uniquely determined by its quotient $V/ \CC e_4$,
 which is a line, say $\ell$, in $\CC^4/ \CC e_4 \simeq \CC^3$ such that
 \begin{align*}
 {}& \langle \pi_\ell(e_1), e_1 \rangle =\frac{1}{3}\\
 {}& \langle \pi_\ell(e_2), e_2 \rangle =\frac{1}{3}\\
 {}& \langle \pi_\ell(e_3), e_3\rangle =\frac{1}{3}.
 \end{align*}
 Now $\ell$ is in $\CC P^2$ and we write it as $\ell = \CC v$, where $v=(a,b,c)\in \CC^3$, $|a|^2+|b|^2+|c|^2=1$. 
 Then $\pi_\ell (e_1)=\overline{a}v$, $\pi_\ell(e_2)=\overline{b}v$, and $\pi_\ell (e_3)=\overline{c}v$,
 thus the three equations above are equivalent to
 $$|a|=|b|=|c|=\frac{1}{\sqrt{3}}.$$ In conclusion, $\mu^{-1}(d)$ is the quotient of $S^1\times S^1\times S^1$ by the diagonal action of $S^1$, thus a 2-torus, which is not simply connected.  
 Let us now consider the following piece of the long exact homotopy sequence of the ${\rm U}(2)$-principal bundle
 ${\mathcal F}_{4,2}^{(\frac{1}{3}, \frac{1}{3}, \frac{1}{3},1)} \to \mu^{-1}(d)$: 
 $$  \pi_1({\mathcal F}_{4,2}^{(\frac{1}{3}, \frac{1}{3}, \frac{1}{3},1)}) \longrightarrow \pi_1(\mu^{-1}(d))\longrightarrow \pi_0({\rm U}(2))=\{0\}.$$
 Since $ \pi_1(\mu^{-1}(d)) \neq \{0\}$, it follows that  $\pi_1({\mathcal F}_{4,2}^{(\frac{1}{3}, \frac{1}{3}, \frac{1}{3},1)})$ is non-zero as well.  
\end{ex}

\begin{ex} \label{exa4}
At a more general level, examples of frames which satisfy the hypothesis of Thm.~\ref{2mainthm}  are provided by \cite[Sect.~4]{CNS}. 
To describe them, let us assume that $k\ge 2$ and consider $d=(d_1, \ldots, d_n) \in \Delta_{n,k}$. 
There exists a permutation $\sigma \in S_n$ such that $$d_{\sigma(1)} \ge d_{\sigma(2)} \ge  \cdots\ge d_{\sigma(n)}.$$ 
Thm.~4.5 in \cite{CNS} gives  sufficient conditions for ${\mathcal F}_{n,k}^d$ to be $q$-connected, where $q\ge 0$ is an arbitrary integer. For $q=1$  these assumptions amount to:
\begin{align*} {}& \bullet \ d_1, \ldots, d_n \in {\mathbb Q},\\
{}& \bullet \ \frac{k}{n\ell} \cdot {\rm min} \left\{ j \mid \sum_{i=1}^jd_{\sigma(i)}>\ell\right\}
\ge \frac{k}{2n}\cdot \frac{2k+1}{k-1}, \ {\it for \ all \ }
\ell \in \{1, \ldots, k-1\}.\end{align*}
In the special case when $\ell = k-1$ one obtains
$${\rm min} \left\{ j \mid \sum_{i=1}^jd_{\sigma(i)}>k-1 \right\}  \ge k+1. $$
This implies 
$$\sum_{i=1}^kd_{\sigma(i)}\le k-1,$$
which, in turn, is equivalent to
$$k-\sum_{i=k+1}^nd_{\sigma(i)} \le k-1,$$
and further to
$$\sum_{i=k+1}^nd_{\sigma(i)}\ge 1.$$
This is exactly assumption (\ref{hypothesis}) in Thm.~\ref{2mainthm}.
\end{ex}

\noindent {\bf Acknowledgement.} I wish to thank the referee for carefully reading the manuscript and suggesting substantial improvements.

\end{document}